\newtheorem{lemma}{Lemma}[section]
\newtheorem{theorem}[lemma]{Theorem}
\newtheorem{corollary}[lemma]{Corollary}
\newcommand{\forme}[1]{}
\begin{document}

\date{\today}
\title{Light tails and the Hermitian dual polar graphs}
\author{
{\bf Jack Koolen}\footnote{JHK was partially supported by the National Natural Science Foundation of China
(No. 11471009). }\\School of Mathematical Sciences\\ University of Science and Technology of China, \\Wen-Tsun Wu Key Laboratory of the Chinese Academy of Sciences, \\
230026, Anhui, PR China \\
e-mail: koolen@ustc.edu.cn
\\
\\
{\bf Zhi Qiao}\\School of Mathematical Sciences\\ University of Science and Technology of China, \\
230026, Anhui, PR China \\
email: gesec@mail.ustc.edu.cn}
\maketitle
{\bf Dedicated to Andries Brouwer on the occasion of his 65th birthday}
\begin{abstract}

Juri\'{s}i\v{c} et al. \cite{JTZ10} conjectured (see also \cite[Problem 13]{DKT}) that if a distance-regular graph $\Gamma$ with diameter $D$ at least three has a light tail, then one of the following holds:
\begin{enumerate}
\item $a_1 =0$;
\item $\Gamma$ is an antipodal cover of diameter three;
\item $\Gamma$ is tight;
\item $\Gamma$ is the halved $2D+1$-cube;
\item $\Gamma$ is a Hermitian dual polar graph $^2A_{2D-1}(r)$ where $r$ is a prime power. 
\end{enumerate}

In this note, we will consider the case when the light tail corresponds to the eigenvalue $-\frac{k}{a_1 +1}$. Our main result is:\\
{\bf Theorem} Let $\Gamma$ be a non-bipartite distance-regular graph with valency $k \geq 3$ , diameter $D \geq 3$ and distinct eigenvalues $\theta_0 > \theta_1 > \cdots > \theta_D$. Suppose that $\Gamma$ is $2$-bounded with smallest eigenvalue  $\theta_D = -\frac{k}{a_1 +1}$. If the minimal idempotent $E_D$, corresponding to eigenvalue $\theta_D$, is a light tail, then $\Gamma$ is the dual polar graph $^2A_{2D-1}(r)$, where $r$ is a prime power.
\\
\\
 As a consequence of this result we will also show:\\
{\bf Theorem.}
 Let $\Gamma$ be a distance-regular graph with valency $k \geq 3$, diameter $D \geq 2$, $a_1 =1$ and $\theta_0 > \theta_1 > \cdots > \theta_D$. If $c_2  \geq5$ and $\theta_D = -k/2$, then $c_2 =5$ and $\Gamma$ is the dual polar graph $^2A_{2D-1}(2)$.

\end{abstract}
\section{Introduction}
For definitions, see next section and \cite{BCN}. Let $\Gamma$ be a distance-regular graph with $n$ vertices, valency $k \geq 3$ and diameter $D \geq 2$. A minimal idempotent E of $\Gamma$, say corresponding to eigenvalue $\theta$, is called a light tail if $E \circ E = a E_0 + b F$, where $E_0 = \frac{1}{n}J$, $F \neq E_0$ is a minimal idempotent of $\Gamma$, and $a,b$ are non-zero real numbers. 

Juri\'{s}i\v{c} et al. \cite{JTZ10} conjectured (see also \cite[Problem 13]{DKT}) that if a distance-regular graph $\Gamma$ with diameter $D$ at least three has a light tail, then one of the following holds:
\begin{enumerate}
\item $a_1 =0$;
\item $\Gamma$ is an antipodal cover of diameter three;
\item $\Gamma$ is tight;
\item $\Gamma$ is the halved $2D+1$-cube;
\item $\Gamma$ is a Hermitian dual polar graph $^2A_{2D-1}(r)$ where $r$ is a prime power. 
\end{enumerate}

Juri\'{s}i\v{c} et al.  \cite{JTZ10} showed that a distance-regular graph $\Gamma$ with diameter at least three has a light tail corresponding to $-1$ if and only if $\Gamma$ is an antipodal cover of diameter three. 
If a tight distance-regular graph has a light tail, then it  is easy to show that $\Gamma$ is antipodal (using the fact that  tight implies that $a_D =0$.).
In this note, we will consider  distance-regular graphs with valency $k$ and intersection number $a_1$ having a light tail corresponding to the eigenvalue $-\frac{k}{a_1 +1}$. Our main result is:
\begin{theorem}\label{dlp}
Let $\Gamma$ be a non-bipartite distance-regular graph with valency $k \geq 3$ , diameter $D \geq 3$ and distinct eigenvalues $\theta_0 > \theta_1 > \cdots > \theta_D$. Suppose that $\Gamma$ is $2$-bounded with smallest eigenvalue  $\theta_D = -\frac{k}{a_1 +1}$. If the minimal idempotent $E_D$, corresponding to eigenvalue $\theta_D$, is a light tail, then $\Gamma$ is the dual polar graph $^2A_{2D-1}(r)$, where $r$ is a prime power.
\end{theorem}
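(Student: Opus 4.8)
The plan is to show that the hypotheses force $\Gamma$ to be a thick regular near $2D$-gon whose quads are Hermitian generalized quadrangles, and then to recognise $\Gamma$ as $^2A_{2D-1}(r)$ via the classification of such near polygons. The point of departure is that $\theta_D=-\frac{k}{a_1+1}$ makes the Delsarte--Hoffman clique bound equal to $a_1+2$, since any clique $C$ satisfies $|C|\le 1-\frac{k}{\theta_D}=a_1+2$. Because two adjacent vertices already share $a_1$ common neighbours, a clique attaining this bound through a given edge exists precisely when those $a_1$ common neighbours are mutually adjacent; equivalently, when the local graph $\Delta(x)$ is a disjoint union of $(a_1+1)$-cliques. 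Thus the whole problem localises to controlling the spectrum of $\Delta(x)$.

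First I would exploit the light-tail hypothesis. Building on the local-eigenvalue analysis of \cite{JTZ10}, the fact that the light tail sits at the least eigenvalue $\theta_D=-\frac{k}{a_1+1}$ should restrict the spectrum of each $\Delta(x)$ to at most the two values $a_1$ and $-1$; a regular graph with at most two distinct eigenvalues is a disjoint union of equal cliques, so each $\Delta(x)$ is the disjoint union of $-\theta_D=\frac{k}{a_1+1}$ cliques of size $a_1+1$. These extend to Delsarte cliques of size $a_1+2$ through $x$, and hence $\Gamma$ is geometric: its edges are partitioned by a family of lines, each with $a_1+2$ points, with exactly $-\theta_D$ lines through every vertex. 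Writing $r:=a_1+1$, every line has $r+1$ points.

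Next I would use $2$-boundedness, which supplies a unique weak-geodetically closed subgraph of diameter two through every pair of vertices at distance two, and makes the lines weak-geodetically closed of diameter one. Together with the geometric structure this turns $\Gamma$ into a regular near $2D$-gon, so each such diameter-two subgraph (a quad) is the collinearity graph of a generalized quadrangle of order $(r,t)$ for some $t$. The quantitative content of the light tail --- the Krein parameter vanishing encoded by $E_D\circ E_D=aE_0+bF$, equivalently the resulting value of $c_2$ --- should then pin down $c_2=r^2+1$, whence $t=c_2-1=r^2$ and every quad is a quadrangle of order $(r,r^2)$, the parameters of the Hermitian quadrangle $H(3,r^2)$. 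In the setting of the corollary this is exactly the computation yielding $c_2=5$, $t=4$ and $r=2$.

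Finally, $\Gamma$ is a thick regular near $2D$-gon in which any two points at distance two lie in a quad, and every quad has order $(r,r^2)$. By the recognition of dual polar spaces from their rank-two residues, $\Gamma$ is the dual polar graph of a rank-$D$ polar space all of whose rank-two residues are Hermitian quadrangles; a polar space of rank at least three with this property is the Hermitian space $H(2D-1,r^2)$, so $\Gamma={}^2A_{2D-1}(r)$. I expect the genuine obstacles to be, first, establishing rigorously the local statement that the light tail collapses the spectrum of $\Delta(x)$ to $\{a_1,-1\}$ (the step that converts the spectral and Krein hypothesis into combinatorial regularity), and second, upgrading ``quad of order $(r,r^2)$'' to ``Hermitian quad'' and then matching the full intersection array to the classification, since generalized quadrangles of order $(r,r^2)$ need not be unique and the near-polygon recognition theorems carry hypotheses that must be verified from the $2$-boundedness and light-tail data.
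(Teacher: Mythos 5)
Your opening move coincides with the paper's: the light tail at $\theta_D=-\frac{k}{a_1+1}$ forces each local graph to be a disjoint union of $(a_1+1)$-cliques (the paper cites \cite[Corollary 6.3]{JTZ10} for exactly this), whence all maximal cliques are Delsarte cliques and $\Gamma$ is geometric. But there is a genuine gap in the middle of your argument: you pass from ``$2$-bounded and geometric'' to ``$\Gamma$ is a regular near $2D$-gon in which any two points at distance two lie in a quad,'' and that implication is false as stated. Being a regular near $2D$-gon requires $\gamma_i=1$ (every vertex at distance $i$ from a line sees a unique nearest point of that line) and $a_i=c_i a_1$ for \emph{all} $i\leq D$, whereas $2$-boundedness, via Hiraki's criterion (Lemma~\ref{hiraki}), only gives this for $i\leq 2$. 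Propagating the structure from $i=2$ up to $i=D$ is the entire technical content of the paper's proof: an induction showing that $m$-bounded implies $(m+1)$-bounded, which uses the light tail \emph{quantitatively} at every distance --- the functional equation $u_i(\theta_D)^2=\alpha+\beta u_i(\theta')$ from \cite[Theorem 4.1]{JTZ10} combined with the geometric recursion $\gamma_i u_i(\theta_D)+(a_1+2-\gamma_i)u_{i+1}(\theta_D)=0$ pins down $c_i=\frac{(a_1+1)^{2i}-1}{(a_1+1)^2-1}$, and then Hiraki's inequality $c_{m+1}\geq c_m+(c_m-c_{m-1})(c_2-c_1)$ together with a monotonicity argument on $u_i(\theta_1)$ forces $\gamma_{m+1}=1$. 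Your proposal uses the light tail only qualitatively (for the local structure) and once more for $c_2$; nothing in it rules out $\gamma_i\geq 2$ at some distance $i\geq 3$, which would destroy the near-polygon structure you need for the recognition step.

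Two smaller points. First, your determination of $c_2=(a_1+1)^2+1$ is correct in substance but the mechanism is not ``a vanishing Krein parameter'': in the paper it comes from combining the lower bound on $\theta_1$ of Lemma~\ref{lbd} (equivalent to the light tail) with the upper bound of Lemma~\ref{ubd} coming from the induced $GQ(a_1+1,c_2-1)$ supplied by $2$-boundedness, or equivalently from the same standard-sequence computation at $i=2$. Second, your endgame differs from the paper's: you propose recognising $\Gamma$ via Cameron-style classification of dual polar spaces from their rank-two residues, whereas the paper simply computes the full intersection array and invokes \cite[Theorem 9.4.7]{BCN} ($^2A_{2D-1}(r)$ is determined by its intersection array for $D\geq 3$). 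Your route would sidestep the non-uniqueness of $GQ(r,r^2)$ honestly, but it presupposes the full regular near-polygon structure with quads --- i.e., exactly the induction you have omitted --- so it does not save any work.
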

 As a consequence of this result we will also show:
 \begin{theorem}
 Let $\Gamma$ be a distance-regular graph with valency $k \geq 3$, diameter $D \geq 2$, $a_1 =1$ and distinct eigenvalues $\theta_0 > \theta_1 > \cdots > \theta_D$. If $c_2  \geq 5$ and $\theta_D = -k/2$, then $c_2 =5$ and $\Gamma$ is the dual polar graph $^2A_{2D-1}(2)$. 
 \end{theorem}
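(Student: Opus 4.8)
The plan is to obtain the statement as a consequence of Theorem~\ref{dlp}, with the boundary value $D=2$ handled separately by hand. The first point to record is that the two arithmetic hypotheses of Theorem~\ref{dlp} are immediate here. Since $a_1=1$ we have $-\frac{k}{a_1+1}=-\frac{k}{2}=\theta_D$, so the smallest eigenvalue has exactly the required shape; and since $a_1=1>0$ every edge lies on a triangle, so $\Gamma$ is non-bipartite. Thus for $D\ge 3$ it suffices to verify the two structural hypotheses of Theorem~\ref{dlp}, namely that $\Gamma$ is $2$-bounded and that $E_D$ is a light tail. Theorem~\ref{dlp} then yields $\Gamma={}^2A_{2D-1}(r)$, and I will read off $r$ and $c_2$ at the end.

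The common engine for both structural hypotheses is the Delsarte--Hoffman clique bound. Because $\theta_D=-\frac{k}{2}$, any clique of $\Gamma$ has at most $1-k/\theta_D=3$ vertices, so the triangles are exactly the Delsarte (tight) cliques, and because $a_1=1$ each edge lies on a unique triangle. Hence $\Gamma$ is geometric: it is the collinearity graph of a partial linear space whose lines are these $3$-cliques, and equivalently the local graph at each vertex is a perfect matching. This local picture, combined with $c_2\ge 5$, is what I would feed into the two verifications. For the light tail I would compute the Hadamard square $E_D\circ E_D$ using the tightness of the triangles together with the relation $\theta_D=-\frac{k}{a_1+1}$: the geometric structure should collapse the Krein (Hadamard-square) expansion of $E_D\circ E_D$ so that only $E_0$ and a single further idempotent survive, which is precisely the light-tail condition. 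For $2$-boundedness I would use the $3$-cliques and the size of the $\mu$-graphs to produce, for every pair of vertices at distance $2$, a weak-geodetically closed subgraph of diameter $2$ containing them.

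The main obstacle is exactly this last verification: controlling the $\mu$-graphs well enough to exhibit the weak-geodetically closed subgraphs of diameter $2$. Here the hypothesis $c_2\ge 5$ is used essentially, since it forces each $\mu$-graph to be large enough that the triangles through a pair of vertices at distance $2$ knit together into a closed subgraph rather than fragmenting; the smaller values of $c_2$ are what this hypothesis must rule out (their diameter-two analogues are the degenerate quadrangles $GQ(2,1)$ and $GQ(2,2)$). Once $2$-boundedness and the light-tail property are established, Theorem~\ref{dlp} gives $\Gamma={}^2A_{2D-1}(r)$ for a prime power $r$.

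Finally I would pin down the parameters. The dual polar graph ${}^2A_{2D-1}(r)$ has its Delsarte cliques (lines) of size $r+1$, so $a_1=r-1$; imposing $a_1=1$ forces $r=2$, which is a prime power, and for this family $c_2=r^2+1$, whence $c_2=2^2+1=5$, giving both assertions. The case $D=2$ is treated directly: $\Gamma$ is then strongly regular with $\lambda=1$, smallest eigenvalue $-\frac{k}{2}$ and $\mu=c_2\ge 5$; the clique bound again makes it the point graph of a generalized quadrangle with lines of size $3$, i.e.\ a $GQ(2,t)$, and the classification of such quadrangles ($t\in\{1,2,4\}$) together with $\mu=t+1\ge 5$ forces $t=4$. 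Hence $\Gamma$ is the point graph of the unique $GQ(2,4)$, namely ${}^2A_3(2)$, with $c_2=5$.
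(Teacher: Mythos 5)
Your high-level skeleton (geometric via the Delsarte bound, then verify $2$-boundedness and the light tail, then invoke Theorem~\ref{dlp}) matches the paper's, but the two verifications you leave as sketches are exactly where the content lies, and the mechanism you propose for the light tail does not work. You claim the geometric structure alone should ``collapse the Krein expansion of $E_D\circ E_D$'' to $E_0$ plus one idempotent. That is false: the Hamming graph $H(D,3)$ has $a_1=1$, $\theta_D=-k/2$, is geometric and $2$-bounded, yet $E_D$ is not a light tail (its $\theta_1=k-3$ strictly exceeds the equality value $\frac{k-6}{4}$ of Lemma~\ref{lbd}). The light-tail property is equivalent to equality in the lower bound $\theta_1\geq \frac{k-(a_1+1)(a_1+2)}{(a_1+1)^2}$ of Lemma~\ref{lbd}, and the only way the hypothesis $c_2\geq 5$ enters is through the opposing upper bound $\theta_1\leq \frac{k-(a_1+1)}{c_2-1}-1$ of Lemma~\ref{ubd}, which requires an induced $GQ(a_1+1,c_2-1)$ and hence $2$-boundedness \emph{first}. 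Squeezing the two bounds forces $c_2=(a_1+1)^2+1=5$ and simultaneously delivers the light tail (Theorem~\ref{lt}, Corollary~\ref{c2dpl}). In your write-up $c_2=5$ is only read off at the end from the classified graph, but you cannot reach the classification without it; you have the logical order inverted, and you misattribute the role of $c_2\geq 5$ to the $2$-boundedness step, where the paper does not use it in the way you describe.

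The $2$-boundedness verification is also not supplied: ``knitting together $\mu$-graphs'' is not an argument. The paper's route is via the standard sequence of $\theta_D$: the line parameters $\gamma_i$ are nondecreasing with values in $\{1,2\}$, the constraint $|u_i(\theta_D)|\leq 1$ forces the jump index $e$ to satisfy $e\geq D/2$, so for $D\geq 5$ one gets $\gamma_1=\gamma_2=1$, hence $a_2=c_2a_1$, and then Hiraki's criterion (Lemma~\ref{hiraki}) gives $2$-boundedness. This argument genuinely fails for $D=3,4$ (where $e\geq D/2$ only yields $e\geq 2$), and the paper must quote \cite[Theorem 1.2]{KQ15} for $2\leq D\leq 4$; your proposal handles $D=2$ directly (that part is fine) but leaves $D=3,4$ uncovered. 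As written, the proposal is an outline with the two essential steps unproved and one of them based on a false heuristic.
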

 
The authors \cite{KQ15} conjectured that if $\Gamma$ is a distance-regular graph with valency $k\geq 3$, $a_1 =1$, diameter $D \geq 3$ and smallest eigenvalue $-k/2$, then $\Gamma$ is either the Hamming graph $H(D,3)$, the dual polar graph $B_D(2)$ or the dual polar graph $^2A_{2D-1}(2)$ if $D$ is large enough. We confirm this conjecture when $c_2 \geq 5$. 

This note is organized as follows. In the next section, we give the definitions and some preliminary results.  
Then, in Section 3, we start with a sufficient condition for a distance-regular graph to have light tail,  and then give a proof of Theorem 1.1. In the last section we will give a proof of Theorem 1.2.

\section{Preliminaries and definitions}

All graphs considered in this note are finite, undirected and simple (for more background information and undefined notions, see \cite{BCN} or \cite{DKT}). 
We denote $x\sim y$ if the vertices $x,y$ are adjacent. 
For a connected graph $\Gamma=(V(\Gamma),E(\Gamma))$, the {\em distance} $d_{\Gamma}(x,y)$ between any two vertices $x,y$ is the length of a shortest path between $x$ and $y$ in $\Gamma$, and the {\em diameter} $D(\Gamma)$ is the maximum distance between any two vertices of $\Gamma$, and if the graph is clear from the context, we use $d(x,y)$ and $D$ for simplicity. 
For any vertex $x$, let $\Gamma_i(x)$ be the set of vertices in $\Gamma$ at distance precisely $i$ from $x$, where $0 \leq i\leq D$. 
A graph $\Gamma$ is locally $\Delta$, when for all vertices $x$ the subgraph induced by $\Gamma_1(x)$ is isomorphic to $\Delta$.

A connected graph $\Gamma$ with diameter $D$ is called a {\em distance-regular graph} if there are integers $b_i$, $c_i$ ($0\leq i \leq D$) such that for any two vertices $x,y\in V(\Gamma)$ with $d(x,y)=i$, there are exactly $c_i$ neighbors of $y$ in $\Gamma_{i-1}(x)$ and $b_i$ neighbors of $y$ in $\Gamma_{i+1}(x)$. 
The numbers $b_i, c_i$ and $a_i:=b_0-b_i-c_i~(0 \leq i\leq D)$ are called the {\em intersection numbers} of $\Gamma$. 
Set $c_0=b_{D}=0$. We observe $a_0=0$ and $c_1=1$. 
The array $\iota(\Gamma)=\{b_0,b_1,\ldots,b_{D-1};c_1,c_2,\ldots,c_{D}\}$ is called the {\em intersection array} of $\Gamma$. 
In particular, $\Gamma$ is a regular graph with valency $k:=b_0$. 

Let $\Gamma$ be a distance-regular graph with $n$ vertices,  valency $k$ and diameter $D$. 
For $0\leq i\leq D$, let  $A_i$ be the $\{0, 1\}$-matrix indexed by the vertices of $\Gamma$ with $(A_i)_{xy}= 1$ if and only $d(x, y)=i$ for vertices $x, y$ of $\Gamma$. 
We call $A_i$ the {\em distance-$i$ matrix} of $\Gamma$, and $A:=A_1$ the {\em adjacency matrix} of $\Gamma$. 
The {\em eigenvalues} $\theta_0>\theta_1>\cdots>\theta_D$ of $\Gamma$ are just the eigenvalues of its adjacency matrix $A$. 
We denote $m_i$ the {\em multiplicity} of $\theta_i$. 

For each eigenvalue $\theta_i$ of $\Gamma$, let $W_i$ be a matrix with its columns forming an orthonormal basis for the eigenspace of $\theta$. 
And $E_i:=W_i W_i^T$ is called the {\em minimal idempotent} corresponding to the eigenvalue $\theta_i$, satisfying $E_i E_j= \delta_{ij} E_j$ and $A E_i=\theta_i E_i$, where $\delta_{ij}$ is the Kronecker symbol. 
Note that $nE_0$ is the all-one matrix $J$. 

The set of distance matrices $\{A_0=I,A_1,A_2,\ldots,A_D\}$ forms a basis for a commutative $\mathbb R$-algebra $\mathcal A$, known as the {\em Bose-Mesner algebra}. 
The set of minimal idempotents $\{E_0=\frac{1}{n}J,E_1,E_2,\ldots,E_D\}$ is another basis for $\mathcal A$. 
There exist $(D+1)\times(D+1)$ matrices $P$ and $Q$ (see \cite[p.45]{BCN}), such that the following relations hold:
    \begin{equation}	\label{eq:pq}
    A_i=\sum_{j=0}^D P_{ji}E_j ~~ \text{and} ~~ E_i=\frac{1}{n}\sum_{j=0}^D Q_{ji}A_j~(0\leq i\leq D).
    \end{equation}
Note that  $Q_{0i}=m_i$ (see \cite[Lemma 2.2.1]{BCN}). 
The Bose-Mesner algebra $\mathcal A$ is also closed under the Schur product, therefore we also have
    \begin{equation}	\label{eq:krn}
    E_i\circ E_j=\frac{1}{n}\sum_{h=0}^D q_{ij}^h E_h ~(0\leq i,j\leq D),
    \end{equation}
where the numbers $q_{ij}^h~(0\leq i,j,h\leq D)$ are called the {\em Krein parameters} (see \cite[p.48]{BCN}).
It is well known, cf. \cite[Lemma 2.3.1, Theorem 2.3.2]{BCN}, the Krein parameters are non-negative real numbers and also $q_{ij}^0=\delta_{ij}m_j$ holds. 

Let $E_i=W_i W_i^T$ be a minimal idempotent corresponding to the eigenvalue $\theta_i$, where the columns of $W_i$ form an orthonormal basis of the eigenspace of $\theta_i$. 
We denote the $x$-th row of $W_i$ by $\hat{x}$. 
Note that $E_i\circ A_j=\frac{1}{n}Q_{ji}A_j$, hence all the vectors $\hat{x}$ has the same length $\frac{1}{n}Q_{0i}$ and the cosine of the angle between two vectors $\hat{x}$ and $\hat{y}$ is $u_j(\theta_i):=\frac{Q_{ji}}{Q_{0i}}$, where $d(x,y)=j$.
The map $x\mapsto \hat{x}$ is called a {\em spherical representation} of $\Gamma$, and the sequence $\{u_j(\theta_i)\mid 0\leq j\leq D\}$ is called the {\em standard sequence} of $\Gamma$,  corresponding to the eigenvalue $\theta_i$. 
As $A W=\theta_i E_i W=\theta_i W$, we have $\theta_i \hat{x}=\sum_{y\sim x} \hat{y}$, and hence the following holds:
    \begin{equation}	\label{eq:std}
    	\begin{aligned}
   	 		&c_j u_{j-1}(\theta_i)+ a_j u_j(\theta_i)+ b_j u_{j+1}(\theta_i)=\theta_i u_j(\theta_i)~(1\leq j\leq D-1)\\
			&c_Du_{D-1}(\theta_i)+a_D u_{D}(\theta_i)=\theta_i u_D(\theta_i)
		\end{aligned}	
    \end{equation}
with $u_0(\theta_i)=1$ and $u_1(\theta_i)=\frac{\theta_i}{k}$. 

A minimal idempotent $E$ is called a {\em light tail} if $E\circ E=aE_0 + b F$ for some minimal idempotent $F\neq E_0$ and nonzero real numbers $a,b$. 
The minimal idempotent $F$ is called the {\em associated minimal idempotent}  to the light tail $E$. 
Note that $E_0\circ E_0=\frac{1}{n} E_0$ holds and that if $\Gamma$ is bipartite, then also $E_D\circ E_D=\frac{1}{n} E_0$holds, see for example \cite[Lemma 3.3]{P99}. So the eigenvalue corresponding to the light tail $E$ is different from $\pm k$. 

Juri\v{s}i\'{c} et al. \cite[Theorem 3.2]{JTZ10} has shown the following result. 
\begin{lemma}	\label{mbd}
Let $\Gamma$ be a distance-regular graph with valency $k\geq 3$, diameter $D\geq 2$ and distinct eigenvalues $\theta_0>\theta_1>\cdots>\theta_D$. Let $\theta_i\neq \pm k$ be an eigenvalue of $\Gamma$, then 
    \begin{equation}
    \frac{m_i-k}{k}\geq -\frac{(\theta_i+1)^2 a_1(a_1+1)}{((a_1+1)\theta_i+k)^2+ka_1 b_1},
    \end{equation}
and equality holds if and only if $E_i$ is a light tail. 
\end{lemma}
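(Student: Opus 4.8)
The plan is to extract the inequality from a single, explicitly computable positive semidefinite matrix attached to each vertex, and to read the equality case off from the rigidity of Cauchy--Schwarz. Fix a vertex $x$ and work in the eigenspace $V_i=\mathrm{col}(E_i)$, of dimension $m_i$. From the spherical representation (rescaled so that $\langle\hat{x},\hat{x}\rangle=1$), the inner product $\langle\hat{y},\hat{z}\rangle$ depends only on $d(y,z)$ through the standard sequence $u_j=u_j(\theta_i)$, with $u_0=1$ and $u_1=\theta_i/k$. Let $y_1,\dots,y_k$ be the neighbours of $x$ and set $\tilde{y}_j:=\hat{y}_j-u_1\hat{x}$, the projection of $\hat{y}_j$ onto $\hat{x}^{\perp}$. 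Since $\theta_i\hat{x}=\sum_{y\sim x}\hat{y}$ and $\theta_i=ku_1$, we get $\sum_j\tilde{y}_j=0$; moreover the $\tilde{y}_j$ lie in $\hat{x}^{\perp}\cap V_i$, which has dimension $m_i-1$. Hence their $k\times k$ Gram matrix $M=M(x)$ is positive semidefinite, has the all-ones vector in its kernel, and has rank at most $m_i-1$.

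Next I would express $M$ through the local graph $\Delta_x=\Gamma_1(x)$. Using $\langle\hat{y},\hat{z}\rangle=u_1$ for adjacent and $u_2$ for distinct non-adjacent neighbours, one finds $M=(1-u_2)I+(u_1-u_2)A_{\Delta}+(u_2-u_1^2)J$, where $A_{\Delta}$ is the adjacency matrix of $\Delta_x$ and $u_2$ is fixed by the recurrence $1+a_1u_1+b_1u_2=\theta_iu_1$. The decisive point is that the two traces I need do not involve the unknown spectrum of $\Delta_x$, only its $a_1$-regularity: since $\mathrm{tr}(A_{\Delta})=0$ and $\mathrm{tr}(A_{\Delta}^2)=ka_1$, both $\mathrm{tr}(M)$ and $\mathrm{tr}(M^2)$ are explicit rational functions of $k,a_1,b_1,\theta_i$ (for instance $\mathrm{tr}(M)=k(1-u_1^2)$).

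I would then apply the rank form of Cauchy--Schwarz, $(\mathrm{tr}\,M)^2\le \mathrm{rank}(M)\cdot\mathrm{tr}(M^2)$, together with $\mathrm{rank}(M)\le m_i-1$, to obtain
\[
m_i-1\;\ge\;\frac{(\mathrm{tr}\,M)^2}{\mathrm{tr}(M^2)}.
\]
Substituting the trace expressions and eliminating $u_1,u_2$ via $u_1=\theta_i/k$ and the recurrence above is a routine but lengthy manipulation, which I expect to collapse exactly to the claimed bound $\frac{m_i-k}{k}\ge-\frac{(\theta_i+1)^2a_1(a_1+1)}{((a_1+1)\theta_i+k)^2+ka_1b_1}$. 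The hypothesis $\theta_i\neq\pm k$ ensures $M\neq 0$, so the division is legitimate.

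Finally, for the equality characterization I would study when Cauchy--Schwarz is tight. Tightness forces $\mathrm{rank}(M)=m_i-1$ and all nonzero eigenvalues of $M$ to be equal; as these eigenvalues are $(1-u_2)+(u_1-u_2)\eta$ with $\eta$ running over the non-principal eigenvalues of $\Delta_x$, this means each $\eta$ takes at most two values, one of which, $\eta^{\ast}=-1-\frac{b_1}{\theta_i+1}$, yields eigenvalue $0$. Thus equality holds precisely when every local graph is strongly regular with $\eta^{\ast}$ among its eigenvalues. The main obstacle is converting this purely local rigidity into the global assertion that $E_i$ is a light tail. I would bridge the two by rewriting $E_i\circ E_i=aE_0+bE_h$, via $(E_i)_{xy}=\frac{m_i}{n}u_{d(x,y)}(\theta_i)$, as the condition that the sequence $\bigl(u_j(\theta_i)^2\bigr)_j$ is an affine image of a single standard sequence $\bigl(u_j(\theta_h)\bigr)_j$, and then matching this against the two-valued local spectrum above through the Schur-square representation $x\mapsto\hat{x}\odot\hat{x}$ and the expansion $E_i\circ E_i=\frac{1}{n}\sum_h q_{ii}^h E_h$. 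Pinning down this equivalence, rather than verifying the numerical inequality, is where the real work lies.
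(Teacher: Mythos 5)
The paper does not actually prove this lemma; it is quoted verbatim from [JTZ10, Theorem 3.2], so there is no in-paper proof to compare against. Judged on its own terms, your inequality derivation is sound: the Gram matrix $M=(1-u_2)I+(u_1-u_2)A_{\Delta}+(u_2-u_1^2)J$ is correct, $\mathrm{tr}(M)=k(1-u_1^2)>0$ precisely because $\theta_i\neq\pm k$, the rank bound $\mathrm{rank}(M)\le m_i-1$ is legitimate, and after substituting $u_1=\theta_i/k$ and $u_2=\frac{\theta_i^2-a_1\theta_i-k}{kb_1}$ the quantity $m_i-1\ge(\mathrm{tr}\,M)^2/\mathrm{tr}(M^2)$ does collapse to the stated bound (the factor $(k-\theta_i)^2$ cancels throughout and $(a_1+1)\theta_i^2-ka_1\theta_i-k^2=(\theta_i-k)((a_1+1)\theta_i+k)$ produces the denominator). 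This is the classical Terwilliger-style local-eigenvalue argument and it proves the inequality.

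The genuine gap is the equality characterization, which is half the statement and which you yourself flag as "where the real work lies." Your route makes equality equivalent to a simultaneous local condition at every vertex (each local graph has non-principal spectrum contained in $\{\eta^*,\eta_0\}$ with $\eta^*=-1-\frac{b_1}{\theta_i+1}$, together with $\mathrm{rank}(M(x))=m_i-1$), and the passage from this local rigidity to the global Krein condition $|\{h\ge 1: q_{ii}^h\neq 0\}|=1$ is a substantial theorem in its own right, not a routine matching; neither implication is supplied. A cleaner route that yields the equality case for free is to apply Cauchy--Schwarz globally to the nonnegative weights $f_h=q_{ii}^h$ $(h\ge 1)$: the coefficients of $A_0$, $A_1$, $A_2$ in $nE_i\circ E_i=\sum_h q_{ii}^hE_h$ give closed forms for $\sum_h f_hm_h$, $\sum_h f_hm_h\theta_h$ and $\sum_h f_hm_h\theta_h^2$ in terms of $m_i,k,a_1,b_1,\theta_i$ alone, and then
\begin{equation*}
\Bigl(\sum_{h\ge 1} f_hm_h(\theta_h-c)\Bigr)^2\le\Bigl(\sum_{h\ge 1} f_hm_h\Bigr)\Bigl(\sum_{h\ge 1} f_hm_h(\theta_h-c)^2\Bigr)
\end{equation*}
for a suitable constant $c$ reproduces the bound, with equality if and only if $\theta_h$ is constant on $\{h\ge 1: f_h\neq 0\}$, i.e.\ if and only if exactly one $q_{ii}^h$ with $h\ge 1$ is nonzero --- which is precisely the light tail condition. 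As it stands, your proposal establishes the inequality (the only part this paper actually uses, in Lemma 3.1) but not the "if and only if" clause of the lemma.
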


For a graph $\Gamma$, a partition $\Pi=\{P_1,P_2,\ldots ,P_t\}$ of $V(\Gamma)$ is called {\em equitable} if there are constants $\alpha_{ij}$ $(1\leq i,j\leq t)$ such that all vertices $x\in P_i$ have exactly $\alpha_{ij}$ neighbours in $P_j$. 
The $\alpha_{ij}$'s $(1\leq i,j\leq t)$ are called the parameters of the equitable partition. 

Let $\Gamma$ be a distance-regular graph. 
For a set $S$ of vertices of $\Gamma$, define $S_i=\{x\in V(\Gamma)\mid d(x,S)=i \}$, where $d(x,S):=\min\{d(x,y)\mid y\in S\}$. 
Then number $\rho(S):=\max\{i\mid S_i\neq \emptyset\}$ is called the {\em covering radius} of $S$. The set $S$ is called a {\em completely regular code} of $\Gamma$ if the distance-partition $\{S=S_0, S_1,S_2,\ldots, S_{\rho(S)}\}$ is equitable. 

The following result was first shown by Delsarte \cite{D73} for strongly regular graphs, and extended by Godsil \cite{G93} to distance regular graphs. 

\begin{lemma}\label{clique}
    Let $\Gamma$ be a distance-regular graph with valency $k$ and smallest eigenvalue $\theta_{\min}$. Let $C$ be a clique in $\Gamma$ with  $c$ vertices. Then $c\leq 1-\frac{k}{\theta_{\min}}$, and equality holds if and only if $C$ is a completely regular code with covering radius $D-1$.     
\end{lemma}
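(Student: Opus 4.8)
The plan is to derive the inequality from the positive semidefiniteness of the minimal idempotent $E_D$ attached to $\theta_{\min}=\theta_D$, and then to read off the equality case from the vanishing of $E_D$ on the characteristic vector of $C$. Write $\chi$ for the characteristic vector of $C$. Since $C$ is a clique, any two of its vertices are at distance $0$ or $1$, so $\chi^{T}A_j\chi=0$ for $j\ge 2$, while $\chi^{T}A_0\chi=c$ and $\chi^{T}A_1\chi=c(c-1)$. Using $E_D=\frac1n\sum_{j}Q_{jD}A_j$ from \eqref{eq:pq}, together with $Q_{0D}=m_D$ and $Q_{1D}/Q_{0D}=u_1(\theta_D)=\theta_D/k$ from \eqref{eq:std}, I get
\[
\chi^{T}E_D\chi=\frac1n\bigl(Q_{0D}\,c+Q_{1D}\,c(c-1)\bigr)=\frac{m_D\,c}{n}\Bigl(1+\tfrac{\theta_D}{k}(c-1)\Bigr).
\]
As $E_D=W_DW_D^{T}$ is positive semidefinite the left-hand side is nonnegative, and since $m_D,c,n>0$ and $\theta_D<0$ this rearranges exactly to $c\le 1-\frac{k}{\theta_{\min}}$.

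For the equality statement, note first that $E_D$ is a symmetric idempotent, so $\chi^{T}E_D\chi=\|E_D\chi\|^{2}$; hence equality holds in the bound if and only if $E_D\chi=0$. The identical computation with an arbitrary index gives $\chi^{T}E_i\chi=\frac{m_i c}{n}\bigl(1+\tfrac{\theta_i}{k}(c-1)\bigr)\ge 0$ for every $i$, so $\theta_i\ge -\frac{k}{c-1}$ for all $i$. Thus $-\frac{k}{c-1}$ lies at or below $\theta_{\min}$, and for $i\ge 1$ the projection $E_i\chi$ can vanish only when $\theta_i=-\frac{k}{c-1}$, which forces $i=D$ and is precisely the equality condition. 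Writing $s(C)$ for the external distance, i.e.\ the number of indices $i\in\{1,\dots,D\}$ with $E_i\chi\neq 0$, I conclude that $s(C)=D-1$ when equality holds and $s(C)=D$ otherwise.

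To convert this into the stated combinatorial characterization I would invoke the general theory of completely regular codes, namely that every subset $S$ satisfies $\rho(S)\le s(S)$ with equality if and only if $S$ is completely regular (see \cite{BCN}). It then suffices to locate $\rho(C)$, and an elementary argument shows $\rho(C)\ge D-1$ for every clique: if $\rho(C)\le D-2$, choose $x\in C$ and $y\in\Gamma_D(x)$ (which is nonempty); some $x'\in C$ has $d(y,x')\le D-2$, and $d(x,x')\le 1$ would give $d(x,y)\le D-1$, a contradiction. Hence if equality holds in the bound then $D-1\le\rho(C)\le s(C)=D-1$, so $\rho(C)=s(C)=D-1$ and $C$ is a completely regular code of covering radius $D-1$; conversely, if $C$ is completely regular with $\rho(C)=D-1$ then $s(C)=\rho(C)=D-1$, so exactly one $E_i$ with $i\ge 1$ annihilates $\chi$, which by the previous paragraph must be $E_D$, giving equality.

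The only non-elementary ingredient is the equivalence $\rho(S)=s(S)\Leftrightarrow S$ completely regular: the bound $\rho(S)\le s(S)$ is a short dimension count on the smallest $A$-invariant subspace containing $\chi$, but the complete-regularity direction is exactly where Godsil's extension \cite{G93} of Delsarte's strongly regular result \cite{D73} does the work. This is the step I expect to be the main obstacle, and if one wanted a self-contained treatment it would instead be carried out by showing directly that $E_D\chi=0$ forces the distance partition $\{C=C_0,C_1,\dots,C_{D-1}\}$ of $C$ to be equitable.
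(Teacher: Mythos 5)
Your derivation of the inequality is correct and is exactly the standard Delsarte computation (note the paper itself gives no proof of this lemma; it simply cites \cite{D73} and \cite{G93}): positive semidefiniteness of $E_D$ applied to $\chi$, with $\chi^{T}A_j\chi=0$ for $j\geq 2$, gives $c\leq 1-k/\theta_{\min}$, and equality is equivalent to $E_D\chi=0$. Your elementary argument that $\rho(C)\geq D-1$ for every clique is also fine, as is the observation that for $i\geq 1$ the projection $E_i\chi$ can vanish only if $\theta_i=-k/(c-1)$, which forces $i=D$ and is the equality condition.

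The gap is the bridge you use to get complete regularity: you invoke, as a black box, the equivalence ``$\rho(S)=s(S)$ if and only if $S$ is completely regular.'' That equivalence is not a theorem. What is true (and what is in \cite{BCN}) is Delsarte's inequality $\rho(S)\leq s(S)$ together with the implication ``completely regular $\Rightarrow\rho=s$'' (the direction your converse half uses); but the direction your main implication rests on, ``$\rho=s\Rightarrow$ completely regular,'' fails in general. Conceptually: $\rho=s$ says the outer distribution matrix $B=\bigl(|C\cap\Gamma_j(x)|\bigr)_{x,j}$ has rank $\rho+1$, whereas complete regularity says $B$ has only $\rho+1$ \emph{distinct rows}, and small rank does not force few distinct rows. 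So the crucial step (equality $\Rightarrow$ completely regular) is unsupported — and it is precisely the step you flag as ``the main obstacle.'' The repair is the direct argument you defer in your last sentence, and it is short. Suppose $E_D\chi=0$ and let $x$ be any vertex with $d(x,C)=i$; since $C$ is a clique, every vertex of $C$ lies at distance $i$ or $i+1$ from $x$, so with $\gamma=|\Gamma_i(x)\cap C|$ one gets
\[
0=(E_D\chi)_x=\frac{m_D}{n}\bigl(\gamma\,u_i(\theta_D)+(c-\gamma)\,u_{i+1}(\theta_D)\bigr).
\]
The standard sequence of the smallest eigenvalue satisfies $(-1)^ju_j(\theta_D)>0$ (see \cite[Corollary 4.1.2]{BCN}, the same fact the paper uses in its proof of Theorem 1.1), so $u_i(\theta_D)$ and $u_{i+1}(\theta_D)$ are nonzero of opposite signs, and hence $\gamma$ is uniquely determined by $i$. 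Thus the outer distribution of every vertex depends only on its distance to $C$, which by the standard equivalence of Neumaier (see \cite{BCN}, Section 11.1) means the distance partition of $C$ is equitable, i.e.\ $C$ is completely regular; combined with your bounds $D-1\leq\rho(C)\leq s(C)=D-1$ this gives covering radius $D-1$. Without this (or an equivalent) argument, the equality case of the lemma is not proved.
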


A clique $C$ in a distance-regular graph $\Gamma$ that attains the above bound is called a {\em Delsarte clique}. 
A distance-regular graph $\Gamma$ is called {\em geometric} (with respect to $\mathcal C$) if it contains a collection $\mathcal C$ of Delsarte cliques such that each edge is contained in a unique $C\in \mathcal C$. 

Let $\Gamma$ be a geometric distance-regular graph with respect to $\mathcal C$. 
As each edge is contained in a unique Delsarte clique in $C$, we see the Delsarte cliques have size $a_1+2$ and $\theta_{\min}=-\frac{k}{a_1+1}$, where $\theta_{\min}$ is the smallest eigenvalue of $\Gamma$. 
And the parameter $\gamma_i:=|\Gamma_i(x)\cap C|$ $(0\leq i\leq D)$ is well-defined (see \cite[Theorem 7.1]{G93}), where $C$ is any Delsarte clique of $\Gamma$ and $x$ is any vertex with $d(x,C)=i$. 
\begin{lemma}(cf. \cite[Proposition 4.3]{DKT})\label{lm:1}
Let $\Gamma$ be a geometric distance-regular graph with valency $k\geq 3$, diameter $D\geq 2$ and distinct eigenvalues $\theta_0>\theta_1>\cdots>\theta_D$. Then the following holds: 
	\begin{align}
    	\gamma_i u_i(\theta_D)+(a_1+2-\gamma_i) u_{i+1}(\theta_D)=0 ~~~~(0\leq i\leq D-1). \label{eq:f}
	\end{align}
where $u_i(\theta_D)$ $(0\leq i\leq D)$ is the standard sequence corresponding to $\theta_D$. 
\end{lemma}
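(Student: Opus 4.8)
The plan is to work with the spherical representation $x\mapsto\hat{x}$ associated to the smallest eigenvalue $\theta_D$, where $\hat{x}$ is the $x$-th row of $W_D$. Recall from the preliminaries that $\hat{x}\cdot\hat{y}=\frac{m_D}{n}u_j(\theta_D)$ whenever $d(x,y)=j$; in particular each $\hat{x}$ has squared norm $\frac{m_D}{n}$, and for adjacent $x,y$ one has $\hat{x}\cdot\hat{y}=\frac{m_D}{n}\frac{\theta_D}{k}$. The key observation I would establish is that, for every Delsarte clique $C\in\mathcal{C}$, the representation vectors of its members sum to zero. Once this is in place, identity \eqref{eq:f} drops out by pairing this zero vector against a suitable $\hat{x}$.

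First I would show that $\sum_{y\in C}\hat{y}=0$. Since $\Gamma$ is geometric with respect to $\mathcal{C}$, each Delsarte clique $C$ has exactly $a_1+2$ vertices and $\theta_D=-\frac{k}{a_1+1}$, so $\frac{\theta_D}{k}=-\frac{1}{a_1+1}$. Writing $s=\sum_{y\in C}\hat{y}$ and expanding, the $a_1+2$ diagonal terms each contribute $\frac{m_D}{n}$ while the $(a_1+2)(a_1+1)$ ordered pairs of distinct, hence adjacent, clique vertices each contribute $\frac{m_D}{n}\frac{\theta_D}{k}$, giving
\[
\|s\|^2=\frac{m_D}{n}\Big[(a_1+2)+(a_1+2)(a_1+1)\tfrac{\theta_D}{k}\Big]=\frac{m_D}{n}\big[(a_1+2)-(a_1+2)\big]=0.
\]
Hence $s=0$.

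Next I would fix a vertex $x$ with $d(x,C)=i$ for some $0\le i\le D-1$. Because $C$ is a clique, any two of its vertices are adjacent, so their distances to $x$ differ by at most one; combined with $d(x,C)=i$ this forces every vertex of $C$ to lie at distance $i$ or $i+1$ from $x$. By the well-definedness of the geometric parameter $\gamma_i=|\Gamma_i(x)\cap C|$ (see \cite[Theorem 7.1]{G93}), exactly $\gamma_i$ vertices of $C$ are at distance $i$ and the remaining $a_1+2-\gamma_i$ at distance $i+1$. Taking the inner product of $0=\sum_{y\in C}\hat{y}$ with $\hat{x}$ and using $\hat{x}\cdot\hat{y}=\frac{m_D}{n}u_j(\theta_D)$ for $d(x,y)=j$ yields
\[
0=\hat{x}\cdot\sum_{y\in C}\hat{y}=\frac{m_D}{n}\big[\gamma_i u_i(\theta_D)+(a_1+2-\gamma_i)u_{i+1}(\theta_D)\big],
\]
and dividing by $\frac{m_D}{n}\neq 0$ gives \eqref{eq:f}.

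The only genuinely non-routine step is the vanishing of the clique sum $s$; everything else is bookkeeping with the standard sequence and the distance structure of the clique. That vanishing is forced precisely by $C$ attaining the Delsarte bound, i.e. by $\theta_D=-k/(a_1+1)$, which is exactly what makes $\|s\|^2=0$, so I expect no serious obstacle. The only points that deserve a careful check are the triangle-inequality argument confirming that every clique vertex sits at distance $i$ or $i+1$ from $x$, and the fact that $\gamma_i$ does not depend on the choice of $x$ or $C$, the latter being the cited consequence of $\Gamma$ being geometric.
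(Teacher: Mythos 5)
Your proof is correct and is essentially the standard argument behind the result the paper cites: the paper gives no proof of this lemma itself (it only refers to \cite[Proposition 4.3]{DKT}), and that reference's proof likewise rests on the vanishing of $\sum_{y\in C}\hat{y}$ (equivalently $E_D\chi_C=0$) for a clique attaining the Delsarte bound, followed by pairing with $\hat{x}$. Your direct norm computation establishing that vanishing, and the observation that all vertices of $C$ lie at distance $i$ or $i+1$ from $x$, are both sound, so there is nothing to fix.
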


A subgraph $\Delta$ of a graph $\Gamma$ is called {\em strongly closed} if, for all vertices $x,y\in V(\Delta)$ and $z\in V(\Gamma)$ such that $d_{\Gamma}(x,z)+d_{\Gamma}(z,y)\leq d_{\Gamma}(x,y)+1$, holds, then we have that $z\in V(\Delta)$.  
A graph $\Gamma$ is said to be {\em $m$-bounded} for some $1\leq m\leq D$, if for all $1\leq i\leq m$ and all vertices $x,y$ at distance $i$, there exists a strongly closed subgraph $\Delta(x,y)$ of $\Gamma$ with diameter $i$, containing $x$ and $y$ as vertices. 

The following result was shown by Hiraki \cite[Theorem 1.1]{H99}, see also \cite[Proposition 11.3]{DKT}.
\begin{lemma}\label{hiraki}
Let $\Gamma$ be a distance-regular graph with diameter $D\geq 3$. 
Let $m\in\{1,2,\ldots, D-1\}$ and $c_{m+1}\neq 1$. 
Then $\Gamma$ is $m$-bounded if and only if $\Gamma$ is $K_{1,1,2}$-free, $a_1\neq 0$, $a_i=c_i a_1$ holds for $i=1,2,\ldots, m$ and $c_{m-1}<c_m$. 
\end{lemma}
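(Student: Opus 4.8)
The plan is to prove the biconditional by translating the geometric hypothesis of $m$-boundedness into the arithmetic conditions on the intersection numbers, using two structural facts that I would establish first. First, $K_{1,1,2}$-freeness says precisely that any two common neighbours of an adjacent pair are themselves adjacent, which is equivalent to saying that the maximal cliques of $\Gamma$ partition its edge set into \emph{lines}; when $a_1\neq 0$ these lines have size $a_1+2$. Second, any strongly closed subgraph $\Delta$ of diameter $t$ automatically contains the full line through each of its edges (if $\{u,v\}$ is an edge of $\Delta$ and $w$ is a common neighbour, then $d(u,w)+d(w,v)=2=d(u,v)+1$, so $w\in\Delta$), and, by a standard argument, $\Delta$ is itself distance-regular with $c_i(\Delta)=c_i$ and $a_i(\Delta)=a_i$ for $i\le t$ and valency $b_0-b_t$. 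These two facts are the bridge between the two sides of the equivalence.

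For the forward direction I would first use $1$-boundedness, which is implied by $m$-boundedness for every $m\ge 1$: a strongly closed subgraph of diameter $1$ through an edge is a clique that must contain every common neighbour of each of its pairs, which forces $K_{1,1,2}$-freeness. The heart of this direction is the identity $a_i=c_ia_1$. Here I would exploit the strongly closed subgraph of diameter $i$ through a pair $x,y$ at distance $i$, together with the line structure, to establish the ``unique gate'' property of a near polygon up to distance $m$: for a line $L$ and a vertex $x$ with $d(x,L)\le m-1$ there is a unique point of $L$ nearest to $x$, and the remaining $a_1+1$ points lie one step further away. Granting this, fix $y\in\Gamma_i(x)$ and count the neighbours of $y$ line by line: each of the $k/(a_1+1)$ lines through $y$ whose gate is not $y$ contributes exactly one neighbour in $\Gamma_{i-1}(x)$ and $a_1$ neighbours in $\Gamma_i(x)$, so there are $c_i$ such lines and hence $a_i=c_ia_1$. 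The nondegeneracy conditions $a_1\neq 0$ and $c_{m-1}<c_m$ then come from requiring that the strongly closed subgraphs be proper and of the exact prescribed diameter, the hypothesis $c_{m+1}\neq 1$ being used to exclude the degenerate situation in which geodesics are unique and the subgraphs would fail to be regular.

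For the backward direction I would construct the required subgraphs explicitly by a closure process. Given $x,y$ with $d(x,y)=i\le m$, let $\Delta$ be the smallest induced subgraph containing $x$ and $y$ that is closed under two operations: adjoining the full line through any edge already present, and adjoining any $z$ with $d(u,z)+d(z,v)\le d(u,v)+1$ for some $u,v\in\Delta$. The conditions $K_{1,1,2}$-free and $a_1\neq 0$ make lines well defined, while $a_j=c_ja_1$ for $j\le i$ forces the closure to be locally homogeneous, so that $\Delta$ is distance-regular; the strict inequality $c_{i-1}<c_i$, available because $c_{m-1}<c_m$ and the $c_j$ are nondecreasing up to $m$, guarantees that $\Delta$ has diameter exactly $i$ rather than collapsing or overshooting. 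By construction $\Delta$ is then strongly closed, so $\Gamma$ is $m$-bounded.

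The main obstacle I anticipate is controlling the closure process in the backward direction and, dually, the propagation of the gate property in the forward direction: one must show that the generated subspace $\Delta$ stabilises at diameter exactly $i$ and is regular, that is, that the closure neither terminates too early nor swallows all of $\Gamma$. This is exactly where the inequalities $c_{m-1}<c_m$ and $c_{m+1}\neq 1$ carry the weight, since they pin down how far a strongly closed subgraph can extend; verifying the distance-regularity of $\Delta$ at each stage, via a careful induction on the diameter that keeps track of the inherited intersection numbers, is the most technical part of the argument.
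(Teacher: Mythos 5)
First, a point of comparison: the paper does not prove this lemma at all. It is quoted as Hiraki's theorem (\cite[Theorem 1.1]{H99}; see also \cite[Proposition 11.3]{DKT}), and its proof occupies essentially an entire research paper built on the Hiraki--Suzuki theory of strongly closed subgraphs. Your proposal must therefore be measured against that body of work, and as it stands it is a plan rather than a proof. The decisive gap is in the backward direction, which is the hard one. You define $\Delta$ as the closure of $\{x,y\}$ under adjoining lines and adjoining vertices $z$ with $d(u,z)+d(z,v)\leq d(u,v)+1$, and then assert that the hypotheses ``force the closure to be locally homogeneous, so that $\Delta$ is distance-regular,'' and that $c_{i-1}<c_i$ ``guarantees that $\Delta$ has diameter exactly $i$ rather than collapsing or overshooting.'' No mechanism is offered for either claim, and these claims are exactly the content of Hiraki's argument: one must show that the closure never acquires a pair of vertices at distance $i+1$ (a priori each application of the second operation creates new far-apart pairs, and nothing in your set-up prevents the process from swallowing all of $\Gamma$), and the fact that a regular strongly closed subgraph of a distance-regular graph is itself distance-regular is a nontrivial theorem of Suzuki, not a ``standard argument'' that can be invoked for free --- note also that regularity of $\Delta$, which your inherited-parameter claim needs, is not automatic from strong closure. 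Acknowledging that this is ``the most technical part'' does not discharge it; with these two assertions removed, nothing of the backward direction remains.

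Second, your forward direction contains a step that cannot be repaired as described. You say that $a_1\neq 0$ and $c_{m-1}<c_m$ ``come from requiring that the strongly closed subgraphs be proper and of the exact prescribed diameter.'' Under the definition of $m$-bounded given in this paper, $m$-boundedness simply does not imply $a_1\neq 0$: the $D$-cube ($D\geq 3$) is $m$-bounded for every $m\leq D-1$, because the interval $[x,y]=\{z : d(x,z)+d(z,y)=d(x,y)\}$ is a subcube and bipartiteness (a parity argument) makes it strongly closed of diameter $d(x,y)$; moreover the cube is $K_{1,1,2}$-free, has $c_{m+1}=m+1\neq 1$, $c_{m-1}<c_m$, and satisfies $a_i=c_ia_1$ (both sides vanish) --- yet $a_1=0$. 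So any argument deriving $a_1\neq 0$ from $m$-boundedness alone must fail; the statement as paraphrased here tacitly carries Hiraki's conventions, and the paper can afford this because in its applications $\theta_D=-k/(a_1+1)$ already forces $a_1\neq 0$. Your gate-counting derivation of $a_i=c_ia_1$ is sound in spirit, but the ``unique gate'' property up to distance $m$ that it rests on is again only gestured at, via an induction you do not carry out.
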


A generalized quadrangle of order $(s,t)$, denoted by GQ$(s, t)$, (where $s, t$ are positve integers), is a $K_{2,1,1}$-free distance-regular graph with diameter 2 with intersection array $\{(t+1)s, ts; 1, t+1\}$.

\section{Characterization of the Hermitian dual polar graphs}
We will give a characterization of the Hermitian dual polar graphs $^2A_{2D-1}(r)$, where $r$ is a prime power.
First we give a sufficient condition for a minimal idempotent to be a light tail of a distance-regular graph.
\begin{lemma}		\label{lbd}
Let $\Gamma$ be a distance-regular graph with $n$ vertices, valency $k\geq 3$, diameter $D\geq 2$, $a_1\neq 0$ and distinct eigenvalues $\theta_0>\theta_1>\cdots>\theta_D=-\frac{k}{a_1+1}$. 
Then $\theta_1\geq \frac{k-(a_1+1)(a_1 +2)}{(a_1+1)^2}$ and equality holds if and only if $E_D$ is a light tail with associated minimal idempotent $E_1$.
\end{lemma}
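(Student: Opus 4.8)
The plan is to study the Schur square $M:=E_D\circ E_D$ in two ways. On one hand, since $E_D=\frac{m_D}{n}\sum_{j=0}^{D}u_j(\theta_D)A_j$, we have $M=\frac{m_D^2}{n^2}\sum_{j=0}^{D}u_j(\theta_D)^2A_j$, so that for every $r$,
\begin{equation*}
\mathrm{tr}(A_r M)=\frac{k_r\,m_D^2}{n}\,u_r(\theta_D)^2,
\end{equation*}
because there are $nk_r$ ordered pairs at distance $r$ and $(E_D)_{xy}=\frac{m_D}{n}u_{d(x,y)}(\theta_D)$. On the other hand, writing $M=\sum_{h=0}^{D}\mu_h E_h$ with $\mu_h=\frac1n q_{DD}^h$, the numbers $\mu_h$ are non-negative by the non-negativity of the Krein parameters, and $\mu_0=\frac{m_D}{n}$. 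Since $M\neq\mu_0 E_0$ (the cosines $u_j(\theta_D)$ are not all equal), at least one $\mu_h$ with $h\geq1$ is positive. Using \eqref{eq:std} with $\theta_D=-\frac{k}{a_1+1}$ I will first record the two cosines $u_1(\theta_D)=-\frac{1}{a_1+1}$ and $u_2(\theta_D)=\frac{1}{(a_1+1)^2}$.

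The key step is to evaluate, for $\sigma_0:=\frac{k-(a_1+1)(a_1+2)}{(a_1+1)^2}$, the quantity
\begin{equation*}
S:=\mathrm{tr}\big((\theta_0 I-A)(A-\sigma_0 I)\,M\big)=\sum_{h=0}^{D}\mu_h m_h(\theta_0-\theta_h)(\theta_h-\sigma_0).
\end{equation*}
Expanding $(\theta_0 I-A)(A-\sigma_0 I)=-A^2+(k+\sigma_0)A-k\sigma_0 I$ and using $A^2=kA_0+a_1A_1+c_2A_2$, the value of $S$ is a linear combination of $\mathrm{tr}(A_0M),\mathrm{tr}(A_1M),\mathrm{tr}(A_2M)$, which by the trace formula above involves only $u_0,u_1(\theta_D),u_2(\theta_D)$ and the constants $k,k_1,k_2,a_1,c_2$. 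Substituting the two computed cosines (and $k_2=kb_1/c_2$, so that $c_2$ cancels) I expect the algebra to collapse to the identity $S=0$; this cancellation, independent of $c_2$, is the computational heart of the proof and the step most likely to be delicate to carry out cleanly.

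Granting $S=0$, the conclusion is immediate. The $h=0$ term vanishes, so $\sum_{h\geq1}\mu_h m_h(\theta_0-\theta_h)(\theta_h-\sigma_0)=0$, where every weight $\mu_h m_h(\theta_0-\theta_h)$ is non-negative (as $\theta_0=k>\theta_h$ for $h\geq1$) and at least one weight with $h\geq1$ is positive. If $\theta_1<\sigma_0$, then $\theta_h\leq\theta_1<\sigma_0$ for all $h\geq1$, forcing every summand to be non-positive with at least one strictly negative, contradicting $S=0$; hence $\theta_1\geq\sigma_0$, which is the asserted inequality. For equality, suppose $\theta_1=\sigma_0$: then the $h=1$ term vanishes, while for $h\geq2$ we have $\theta_h<\sigma_0$, so each remaining summand is non-positive, and $S=0$ forces $\mu_h=0$ for all $h\geq2$. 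Thus $M=\mu_0E_0+\mu_1E_1$ with $\mu_1>0$ (again because $M$ is not a multiple of $E_0$), which is precisely the statement that $E_D$ is a light tail with associated minimal idempotent $E_1$. Conversely, if $E_D$ is a light tail with associated idempotent $E_1$, then $\mu_h=0$ for $h\geq2$ and $\mu_1>0$, so $S=\mu_1 m_1(\theta_0-\theta_1)(\theta_1-\sigma_0)=0$ yields $\theta_1=\sigma_0$. This establishes the biconditional and completes the argument.
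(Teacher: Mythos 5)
Your proof is correct, and it takes a genuinely different route from the paper's. The paper expands $\left(\tfrac{n}{m_D}E_D\right)\circ\left(\tfrac{n}{m_D}E_D\right)$ in both the $\{A_j\}$ and $\{E_i\}$ bases, reads off $\tilde u_1=\tfrac{m_D u_1(\theta_D)^2-1}{m_D-1}$ as a convex combination $\sum_{i\ge1}\alpha_i\theta_i/k$ with Krein-nonnegative weights, and then invokes the Juri\v{s}i\'c--Terwilliger--\v{Z}itnik multiplicity bound (Lemma \ref{mbd}) to get $m_D\ge \tfrac{a_1k}{a_1+1}+1$ and hence the lower bound on $\theta_1$; the equality analysis is then inherited from that lemma. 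You instead test the Schur square $M=E_D\circ E_D$ against the quadratic $(\theta_0I-A)(A-\sigma_0 I)$ and show the resulting functional $S$ vanishes identically, which packages the distance-$0,1,2$ information and the convexity step into a single sign argument and completely bypasses the multiplicity bound. I checked your claimed cancellation: with $q=a_1+1$ one has $u_1(\theta_D)^2=q^{-2}$, $u_2(\theta_D)^2=q^{-4}$, $1+\sigma_0=b_1/q^2$ and $k+\sigma_0-a_1=b_1(q^2+1)/q^2$, so $\tfrac{n}{km_D^2}S=-\tfrac{b_1}{q^2}+\tfrac{b_1(q^2+1)}{q^4}-\tfrac{b_1}{q^4}=0$ as you predicted (note $k_2=kb_1/c_2$ makes $c_2$ cancel, so no assumption on $c_2$ is needed). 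The remaining points are all in order: $\mu_h=\tfrac1n q_{DD}^h\ge0$, $M$ is not a multiple of $E_0$ because $u_1(\theta_D)^2=q^{-2}<1=u_0^2$ (this is where $a_1\neq0$ enters), and the equality case correctly forces $\mu_h=0$ for $h\ge2$ while $\mu_0=m_D/n$ and $\mu_1$ are nonzero, which is exactly the definition of a light tail with associated idempotent $E_1$. Your argument is self-contained where the paper's leans on an external lemma; the trade-off is that the paper's version also delivers the reusable inequality on $\tilde u_1$ in terms of $m_D$, which is closer in spirit to how Lemma \ref{mbd} is proved in the first place.
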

\begin{proof}
For each eigenvalue $\theta_i$ of $\Gamma$, we denote $m_i$ its multiplicity and $u_j(\theta_i)~(0\leq j\leq D)$ its standard sequence. 

By the definition of standard sequence, we have 
    \begin{equation}	\label{eq:a}
    \frac{n}{m_i}E_i=\sum_{j=0}^D u_j(\theta_i)A_j ~(0\leq i\leq D).
    \end{equation}
As $A_i\circ A_j=\delta_{ij} A_j$, the following holds
    \begin{equation}	\label{eq:b}
    (\frac{n}{m_D}E_D)\circ(\frac{n}{m_D}E_D)=\sum_{i=0}^D u_i(\theta_D)^2 A_i. 
    \end{equation}
By (\ref{eq:krn}) and (\ref{eq:a}), we see
    \begin{align}
    (\frac{n}{m_D}E_D)\circ (\frac{n}{m_D}E_D)
    	&=\frac{n^2}{m_D^2} \cdot \frac{1}{n}\sum_{i=0}^D q_{DD}^i E_i \\
    	&=\frac{n}{m_D}E_0+\frac{n}{m_D^2}\sum_{i=1}^D q_{DD}^i E_i\\
    	&=\frac{1}{m_D}J+\sum_{j=0}^D (\sum_{i=1}^D\frac{m_iq_{DD}^i}{m_D^2} u_j(\theta_i))\cdot A_j\\
    	&=\frac{1}{m_D}J+\frac{m_D-1}{m_D}\tilde{E}, 	\label{eq:c}
    \end{align}
where $\tilde{E}=\sum_{j=0}^D \tilde{u}_j A_j$, $\tilde{u}_j=\sum_{i=1}^D\alpha_i u_j(\theta_i)$ and $\alpha_i=\frac{m_iq_{DD}^i}{m_D(m_D-1)}\geq 0$.

Since $J=\sum_{i=0}^D A_i$, by (\ref{eq:b}) and (\ref{eq:c}), we have 
    \begin{equation}
    u_i^2(\theta_D)=\frac{1}{m_D}+\frac{m_D-1}{m_D}\tilde{u}_i ~(0\leq i\leq D).
    \end{equation}
Since $u_0(\theta_D)=1$ and $u_1(\theta_D)= \frac{\theta_D}{k}= -\frac{1}{a_1+1}$, we see that $\tilde{u}_0=1$ and $\tilde{u}_1=\frac{m_D(\frac{1}{a_1+1})^2-1}{m_D-1}$. 
As $a_1\neq 0$, we see $\theta_D=-\frac{k}{a_1+1}\neq -k$. 
And Lemma \ref{mbd} implies $m_D\geq \frac{a_1 k}{a_1+1}+1$. 
Hence we have $\tilde{u}_1\geq \frac{k-(a_1+1)(a_1+2)}{(a_1+1)^2 k}$. 
Note that $1=\tilde{u}_0=\sum_{i=1}^D\alpha_i$ and $\tilde{u}_1=\sum_{i=1}^D\alpha_i u_1(\theta_i)=\frac{1}{k}\sum_{i=1}^D\alpha_i\theta_i$. 
We have that $\theta_1\geq k \tilde{u}_1\geq \frac{k-(a_1+1)(a_1+2)}{(a_1+1)^2}$, and equality holds if and only if  $\tilde{E} = E_1$ and 
$$ (\frac{n}{m_D}E_D)\circ (\frac{n}{m_D}E_D)=\frac{1}{m_D}J+\frac{m_D-1}{m_D}E_1,$$ both hold, by (\ref{eq:c}) 
This means that  equality holds if and only if 
 $E_D$ is a light tail with associated minimal idempotent $E_1$, by the definition of a light tail.  
\end{proof}

The following result is a consequence of \cite[Cor 3.4]{BKP15}.
\begin{lemma}		\label{ubd}
Let $\Gamma$ be a distance-regular graph with valency $k\geq 3$, diameter $D\geq 2$ and distinct eigenvalues $\theta_0>\theta_1>\cdots>\theta_D$.  
If $c_2\neq 1$ and $\Gamma$ contains an induced $GQ(s,c_2-1)$ for some positive integer $s$, then $\theta_1\leq \frac{k-(a_1+1)}{c_2-1}-1$. 
\end{lemma}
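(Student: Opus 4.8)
The plan is to obtain the bound by feeding the smallest eigenvalue of the induced generalized quadrangle into the cited result \cite[Cor~3.4]{BKP15}. First I would record the eigenvalues of $\Delta := GQ(s,c_2-1)$. By definition $\Delta$ is strongly regular with intersection array $\{c_2 s,(c_2-1)s;1,c_2\}$, so it has parameters $(\lambda,\mu)=(s-1,c_2)$ and valency $c_2 s$; solving the usual quadratic $x^2-(\lambda-\mu)x-(c_2 s-\mu)=0$ gives non-principal eigenvalues $s-1$ and $-c_2$. Thus the smallest eigenvalue of $\Delta$ is $\eta_{\min}=-c_2$, independent of $s$. I would then rewrite the target inequality multiplicatively: since $c_2\geq 2$ (as $c_2\neq 1$) and $k-(a_1+1)=b_1$, the claim $\theta_1\leq \frac{k-(a_1+1)}{c_2-1}-1$ is equivalent to $(\theta_1+1)(c_2-1)\leq b_1$. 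Writing $c_2-1=-(\eta_{\min}+1)$ displays this as a Terwilliger-type interval bound governed by the smallest eigenvalue of $\Delta$.

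With this reformulation, the substance is the application of \cite[Cor~3.4]{BKP15}, which for a suitable regular induced subgraph $\Delta$ of a distance-regular graph confines $\theta_1$ to an interval whose relevant endpoint is controlled by the smallest eigenvalue of $\Delta$, in exact analogy with Terwilliger's bound for local graphs. The hypotheses hold here precisely because $\Delta$ is a $GQ(s,c_2-1)$: it is regular of valency $c_2 s$, and its $\mu$-value equals $c_2$, so that common-neighbour counts inside $\Delta$ are compatible with the intersection number $c_2$ of $\Gamma$. Applying the corollary with $\eta_{\min}=-c_2$ then yields $(\theta_1+1)\bigl(-(\eta_{\min}+1)\bigr)\leq b_1$, i.e. $(\theta_1+1)(c_2-1)\leq b_1$, and dividing by $c_2-1>0$ gives the stated bound.

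The hard part will be confirming that the embedding genuinely matches the hypotheses of \cite[Cor~3.4]{BKP15}, rather than the one-line algebra at the end: one must check that $\Delta$ is an \emph{induced} regular subgraph and that the compatibility between its local structure and the parameters $b_1,c_2$ of $\Gamma$ is of the type the corollary demands (this is exactly where $\mu=c_2$ is used). If I needed to reprove the bound directly, the mechanism is a second-moment argument: take an eigenvector $u$ of $\Delta$ for $\eta_{\min}$, extend it by zero to $V(\Gamma)$, and compare $u^{T}Au=\eta_{\min}\|u\|^2$ with $u^{T}A^2 u$; expanding $u$ in the eigenbasis of $\Gamma$ and using that the non-principal eigenvalues of $\Gamma$ all lie in $[\theta_D,\theta_1]$ produces a quadratic constraint whose extreme case is exactly the interval endpoint $(\theta_1+1)(c_2-1)\leq b_1$. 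Once the hypotheses are verified, the eigenvalue computation $\eta_{\min}=-c_2$ and the rearrangement finish the proof.
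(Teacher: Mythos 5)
Your proposal is correct and follows essentially the same route as the paper: the paper's proof is exactly the application of \cite[Corollary 3.4]{BKP15} to the induced $GQ(s,c_2-1)$, whose smallest eigenvalue is $-c_2$, yielding $-c_2\geq -1-\frac{b_1}{\theta_1+1}$ and hence the bound after rearranging. Your extra caution about matching the hypotheses of the cited corollary and the sketched second-moment fallback are reasonable but not needed, since that corollary applies to an arbitrary induced subgraph.
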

\begin{proof}
As $\Gamma$ contains an induced subgraph $\Gamma'=GQ(s,c_2-1)$, then, by \cite[Corollary 3.4]{BKP15}, we see 
$-c_2=\theta_{\min}(\Gamma')\geq -1-\frac{b_1}{\theta_1+1}$. 
The result follows.
\end{proof} 

The following result  follows immediately from Lemmata \ref{lbd} and \ref{ubd}. 
\begin{theorem}		\label{lt}
Let $\Gamma$ be a distance-regular graph with valency $k\geq 3$, diameter $D\geq 2$, $a_1\neq  0$ and distinct eigenvalues $\theta_0>\theta_1>\cdots>\theta_D=-\frac{k}{a_1+1}$. 
If $\Gamma$ contains an induced $GQ(a_1+1,c_2-1)$ with $c_2=(a_1+1)^2+1$, then $\theta_1=\frac{k-(a_1+1)(a_1 +2)}{(a_1+1)^2}$ and $E_D$ is a light tail with associated minimal idempotent $E_1$.
\end{theorem}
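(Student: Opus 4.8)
The plan is to sandwich $\theta_1$ between the lower bound furnished by Lemma \ref{lbd} and the upper bound furnished by Lemma \ref{ubd}, and then to observe that for the specified parameters these two bounds coincide; equality in Lemma \ref{lbd} is exactly the conclusion we want.

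First I would check that Lemma \ref{ubd} is applicable. Since $a_1\neq 0$ we have $a_1\geq 1$, so $c_2=(a_1+1)^2+1\geq 5$, and in particular $c_2\neq 1$. By hypothesis $\Gamma$ contains an induced $GQ(a_1+1,c_2-1)$, so applying Lemma \ref{ubd} with $s=a_1+1$ and $c_2-1=(a_1+1)^2$ gives
\[
\theta_1\leq \frac{k-(a_1+1)}{(a_1+1)^2}-1.
\]
Next I would simplify the right-hand side by putting everything over $(a_1+1)^2$, obtaining
\[
\frac{k-(a_1+1)}{(a_1+1)^2}-1=\frac{k-(a_1+1)-(a_1+1)^2}{(a_1+1)^2}=\frac{k-(a_1+1)(a_1+2)}{(a_1+1)^2},
\]
where the last step uses the factorisation $(a_1+1)+(a_1+1)^2=(a_1+1)(a_1+2)$. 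This value is precisely the lower bound $\theta_1\geq \frac{k-(a_1+1)(a_1+2)}{(a_1+1)^2}$ supplied by Lemma \ref{lbd}, which applies because $a_1\neq 0$ and $\theta_D=-\frac{k}{a_1+1}$.

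Since the lower and upper bounds agree, both inequalities must be equalities, so $\theta_1=\frac{k-(a_1+1)(a_1+2)}{(a_1+1)^2}$. In particular, equality holds in Lemma \ref{lbd}, and its equality clause states directly that $E_D$ is a light tail with associated minimal idempotent $E_1$, which completes the argument. There is no real obstacle here: the theorem is an immediate consequence of the two preceding lemmas, and the only point requiring care is the algebraic identity that makes the two bounds coincide for the particular choice $c_2=(a_1+1)^2+1$.
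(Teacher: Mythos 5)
Your proposal is correct and follows exactly the paper's own argument: the upper bound from Lemma \ref{ubd} applied to the induced $GQ(a_1+1,c_2-1)$ with $c_2=(a_1+1)^2+1$ coincides with the lower bound from Lemma \ref{lbd}, forcing equality, and the equality case of Lemma \ref{lbd} yields the light tail conclusion. The algebraic simplification $(a_1+1)+(a_1+1)^2=(a_1+1)(a_1+2)$ you verify is the only computation involved, and it is right.
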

\begin{proof}
Since $c_2=(a_1+1)^2+1\neq 1$ and $\Gamma$ contains an induced $GQ(a_1+1,c_2-1)$, we have $\theta_1\leq \frac{k-(a_1+1)}{c_2-1}-1=\frac{k-(a_1+1)}{(a_1+1)^2}-1$, by Lemma \ref{ubd}. 
Lemma \ref{lbd} implies $\theta_1\geq \frac{k-(a_1+1)}{(a_1+1)^2}-1$. 
These two bounds on $\theta_1$ imply that $\theta_1=\frac{k-(a_1+1)}{(a_1+1)^2}-1$. 
Now again by Lemma \ref{lbd} it follows that $E_D$ is a light tail with associated minimal idempotent $E_1$. 
\end{proof}

{\bf Proof of Theorem 1.1.} 
Since $E_D$ is a light tail with $\theta_D=-\frac{k}{a_1+1}$,  the graph  $\Gamma$ is locally the disjoint union of $(a_1+1)$-cliques by \cite[Corollary 6.3]{JTZ10}. 
This implies that all maximal cliques of $\Gamma$ are $(a_1+2)$-cliques, and hence Delsarte cliques by Lemma \ref{clique}. It follows that $\Gamma$ is geometric.  

Note that we have the following equations: 
	\begin{align}
        &c_i+a_i+b_i=k \label{eq:a1},& &(0\leq i\leq D)\\
        &c_i u_{i-1}(\theta_D)+a_iu_i(\theta_D)+b_i u_{i+1}(\theta_D)=\theta_D u_i(\theta_D) \label{eq:a2},& &(1\leq i\leq D-1)\\
        &\gamma_i u_i(\theta_D)+(a_1+2-\gamma_i) u_{i+1}(\theta_D)=0 \label{eq:a3},& &(0\leq i\leq D-1)
    \end{align}
where Equation (\ref{eq:a3}) is from Lemma \ref{lm:1}. 

As $u_i(\theta_D) \neq 0$ for $0 \leq i \leq D$ by \cite[Corollary 4.1.2]{BCN} we see that $\gamma_i \leq a_1 +1$ for $0\leq i \leq D-1$.
Then Equations (\ref{eq:a1}) (\ref{eq:a2}) and (\ref{eq:a3}) implies the following:
	\begin{align}		
     a_i  	&=c_i\frac{a_1+1-\gamma_{i-1}}{\gamma_{i-1}}+b_i\frac{\gamma_i-1}{a_1+1-(\gamma_i-1)}&	&(1\leq i\leq D-1)\label{eq:a4}\\
    		&=c_i\frac{a_1+1-\gamma_{i-1}-(\gamma_i-1)}{\gamma_{i-1}}+k\frac{\gamma_i-1}{a_1+1}& &(1\leq i\leq D-1)\label{eq:a5}
    \end{align} 

Now we assume $\Gamma$ is $m$-bounded for some $2\leq m\leq D-2$. We will show that $\Gamma$ is $(m+1)$-bounded, and this implies, by induction, that  $\Gamma$ is $(D-1)$-bounded.

We claim that $\gamma_{i}=1$ $(i\leq m)$ and $a_i=c_ia_1$ $(i\leq m)$. 

First, we consider the case $c_{m+1}=1$. 
Choose a vertex $x$ and a Delsarte clique $C$ with $d(x,C)=i$. 
As $\gamma_i\leq a_1+1$, we can choose a vertex $y\in C$ with $d(x,y)=i+1$. We find $C\cap\Gamma_i(x)\subseteq\Gamma(y)\cap\Gamma_i(x)$, i.e. $c_{i+1}\geq \gamma_i$ $(i\leq m)$. 
Note that $c_1\leq c_2\leq \cdots\leq c_D$ by \cite[Proposition 4.1.6]{BCN}. We have $\gamma_i=1$ $(i\leq m)$,  which implies $a_i=c_i a_1$ by Equation (\ref{eq:a4}). 

In the case $c_{m+1}\neq 1$, as $\Gamma$ is $m$-bounded, by Lemma \ref{hiraki}, we have $a_i= c_i a_1$  $(i\leq m)$. 
Note that $\gamma_0=1$. 
Assume $\gamma_{i-1}=1$ $(1\leq i\leq m)$, with $a_i=c_i a_1$, we have $\gamma_i=1$. 
By induction, it follows that $\gamma_i=1$ $(i\leq m)$.

As $E_D$ is a light tail with associated minimal idempotent $F$ corresponding to the eigenvalue $\theta'\neq k$ and $\theta_D=-\frac{k}{a_1+1}$, 
by \cite[Theorem 4.1]{JTZ10}, 
we have 
	\begin{align}
		u_i(\theta_D)^2=\alpha+\beta u_i(\theta')~~~~(0\leq i\leq D),\label{eq:a6}
	\end{align}
where $\theta'= \frac{k-(a_1+1)(a_1+2)}{(a_1+1)^2}$, $\alpha=\frac{a_1+1}{a_1 k +a_1 +1}$ and $\beta=\frac{a_1 k}{a_1k +a_1+1}$. 

For $i\leq m\leq D-2$, we have 
	\begin{align}
		u_{i+1}(\theta_D)&=(-\frac{1}{a_1+1})^{i+1}& &(0\leq i\leq m), \\
		u_{i+1}(\theta')&=\frac{(a_1+1)^{-2(i+1)}(a_1 k+a_1+1)-(a_1+1)}{ka_1}& &(0\leq i\leq m),\label{eq:a7}
	\end{align}
by Equations (\ref{eq:a3}) and (\ref{eq:a6}), as $\gamma_i =1$. 
Substitute  $a_i=c_i a_1$, $b_i=k-c_i(a_1+1)$ $(1\leq i\leq m)$ and Equation (\ref{eq:a7}) into 
    \begin{equation}	\label{leq}
    	c_i u_{i-1}(\theta')+a_iu_i(\theta')+b_i u_{i+1}(\theta')=\theta' u_i(\theta')~~~~(1\leq i\leq D),
    \end{equation}
    to obtain 
	\begin{align}
		c_i=\frac{(a_1+1)^{2i}-1}{(a_1+1)^2-1}~~~~(0\leq i\leq m).\label{eq:b1}
	\end{align}
It follows that $c_2=(a_1+1)^2+1\neq 1$. 
Since $\Gamma$ is $m$-bounded with $m\geq 2$ and $c_2\neq 1$, \cite[Proposition 19]{H09} implies $c_{m+1}\geq c_m + (c_m-c_{m-1})(c_2-c_1)>c_m$, i.e. 
	\begin{align}
		c_{m+1}\geq \frac{(a_1+1)^{2(m+1)}-1}{(a_1+1)^2-1}. \label{eq:b3}
	\end{align}

Equations (\ref{eq:a3}) and (\ref{eq:a6}) imply
	\begin{align}
		&u_{m+2}(\theta_D)=(-\frac{\gamma_{m+1}}{a_1+2-\gamma_{m+1}}) u_{m+1}(\theta_D), \label{eq:b2}\\
		&u_{m+2}(\theta')=\frac{\gamma_{m+1}^2(a_1+2-\gamma_{m+1})^{-2}(a_1+1)^{-2m-2}(a_1k+a_1+1)-(a_1+1)}{k a_1}. \label{eq:a8}
	\end{align} 

As $\Gamma$ is $2$-bounded and $c_2\neq 1$, Lemma \ref{hiraki} implies that $\Gamma$ contains an induced $GQ(a_1+1,c_2-1)$. 
As $\Gamma$ is not bipartite, we see $\theta_D\neq -k$ by \cite[Theorem 8.8.2]{AGT}. 
And $\theta_D=-\frac{k}{a_1+1}$ implies $a_1\neq 0$. 
It follows that $\theta'=\theta_1$ from Theorem \ref{lt}. 
Then we have $u_0(\theta')>u_1(\theta')>\cdots >u_D(\theta')$ by \cite[Theorem 13.2.1]{G93}. 
From Equations (\ref{eq:a6}) and (\ref{eq:b2}) , we see 
	\begin{align}
		\gamma_{m+1}\leq \frac{a_1+1}{2},\label{eq:b5}
	\end{align}
as $u_{m+1}(\theta')>u_{m+2}(\theta')$. 

Substitute Equations (\ref{eq:a5}), (\ref{eq:a7}) and (\ref{eq:a8}) into (\ref{leq}), we obtain 
	\begin{align}\label{eq:a9}
		c_{m+1}=\frac{(a_1+2-\gamma_{m+1})(a_1+1)((a_1+1)^{2(m+1)}-1)-k(a_1+2)(\gamma_{m+1}-1)}{(a_1+1)^2(a_1+2)(a_1+1-\gamma_{m+1})}. 
	\end{align} 

Substituting Equation (\ref{eq:a9}) into (\ref{eq:b3}), we see that 
\begin{align}\label{eq:c3}
	\frac{(\gamma_{m+1}-1)((a_1+1)((a_1+1)^{2m+2}-1)-((a_1+1)^2-1)k)}{a_1(a_1+1)^2(a_1+2)(a_1+1-\gamma_{m+1})}\geq 0.  
\end{align}

As $\gamma_m=1$, by Equation (\ref{eq:a4}), we see that $a_{m+1}\geq a_1c_{m+1}$. 
Since $m\leq D-2$, we see that $k>a_{m+1}+c_{m+1}\geq (a_1+1)c_{m+1}$ and Equation (\ref{eq:b3}) implies 
	\begin{align}
		k>(a_1+1)\frac{(a_1+1)^{2(m+1)}-1}{(a_1+1)^2-1}.\label{eq:b4}
	\end{align} 

We see $\gamma_{m+1}=1$ by Equations (\ref{eq:b5}) (\ref{eq:b4}) and (\ref{eq:c3}). 
It follows that $c_{m+1}=\frac{(a_1+1)^{2(m+1)}-1}{(a_1+1)^2-1}>c_m$ by Equations (\ref{eq:a9}) and (\ref{eq:b1}), and $a_{m+1}=c_{m+1}a_1$ by Equation (\ref{eq:a5}). 
This shows that $\Gamma$ is $(m+1)$-bounded by Lemma \ref{hiraki}. 	

Now we have that $\Gamma$ is $(D-1)$-bounded.  Note that $\gamma_{D-1}=1$. 
By the same way to obtain Equations (\ref{eq:a4}), (\ref{eq:a5}), and (\ref{eq:a7}), 
we have 
	\begin{align}
		a_D	&=(a_1+1)c_D-\frac{k}{a_1+1}\\
			&=a_1 c_D \label{eq:c1},\\
		u_D(\theta')&=\frac{(a_1+1)^{-2D}(a_1k+a_1+1)-(a_1+1)}{ka_1}.\label{eq:c2}
	\end{align}
Substitute $k=(a_1+1)c_D$, $a_D=c_Da_1$ and Equations (\ref{eq:a7}) (\ref{eq:c2}) into 
	\begin{align}
		c_D u_{D-1}(\theta')+a_D u_D(\theta')=\theta'u_D(\theta'),
	\end{align}
 to obtain that 
\begin{align}
	c_D=\frac{(a_1+1)^{2D}-1}{(a_1+1)^2-1}
\end{align}
holds.
Now we see that $\Gamma$ is a distance-regular graph with $a_i=c_i a_1$ and $c_i=\frac{(a_1+1)^{2i}-1}{(a_1+1)^2-1}~(i\leq D)$. 
As $\Gamma$ has diameter $D\geq 3$, by \cite[Theorem 9.4.7]{BCN}, $\Gamma$ is $^2A_{2D-1}(r)$ with $r=a_1+1$. 
\qed

\noindent
{\bf Remark.}
The Hermitian dual polar graph $^2A_{2D-1}(r)$ is determined by its intersection array for diameter $D\geq 3$ (see \cite[Theorem 9.4.7]{BCN}). 
The graph $^2A_3(r)$ is a generalized quadrangle $GQ(r,r^2)$. It is known that for $r$ a prime power with $r\equiv 2 (\mod ~3)$ and $r>2$, there are at least two non-isomorphic generalized quadrangles $GQ(r,r^2)$'s, so $^2A_3(r)$ is not determined by its intersection array for those prime powers  (see \cite[3.1.2, 3.1.6 and 3.2.5]{PT09}). 
\\
\\
\section{Proofs of Theorem 1.2}

First we establish the following consequence of Theorem 1.1.
\begin{corollary}	\label{c2dpl}
Let $\Gamma$ be a non-bipartite geometric distance-regular graph with valency $k\geq 3$, diameter $D\geq 3$ and eigenvalues $\theta_0>\theta_D>\cdots>\theta_D$. 
If $\Gamma$ is $m$-bounded for some $m\geq 2$ and $c_2 \geq (a_1+1)^2+1$, then $c_2 = (a_1 +1)^2+1$ and $\Gamma$ is the dual polar graph $^2A_{2D-1}(r)$ with $r=a_1+1$. 
\end{corollary}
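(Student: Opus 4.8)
The plan is to reduce Corollary~\ref{c2dpl} to Theorem~\ref{dlp}, whose only hypothesis that is not immediate is that $E_D$ be a light tail. I would first collect the free structural information. Since $\Gamma$ is geometric its Delsarte cliques have size $a_1+2$ and $\theta_D=-\frac{k}{a_1+1}$; since $\Gamma$ is non-bipartite, $\theta_D\neq -k$ (cf.\ \cite[Theorem 8.8.2]{AGT}), which forces $a_1\neq 0$, so $a_1\geq 1$. Being $m$-bounded with $m\geq 2$, the graph $\Gamma$ is in particular $2$-bounded, and from $c_2\geq (a_1+1)^2+1\geq 5$ we have $c_2\neq 1$.

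The key step is to produce an induced generalized quadrangle. Exactly as in the proof of Theorem~\ref{dlp}, $2$-boundedness together with $c_2\neq 1$ (through Lemma~\ref{hiraki}, using that $\Gamma$ is then $K_{1,1,2}$-free while remaining geometric) yields a strongly closed subgraph of diameter two which, carrying the Delsarte cliques of $\Gamma$ as its lines, is an induced $GQ(a_1+1,c_2-1)$.

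With this quadrangle in hand I would sandwich $\theta_1$ between the two available bounds. Lemma~\ref{ubd} gives $\theta_1\leq \frac{k-(a_1+1)}{c_2-1}-1$, while Lemma~\ref{lbd} (applicable since $a_1\neq 0$ and $\theta_D=-\frac{k}{a_1+1}$) gives $\theta_1\geq \frac{k-(a_1+1)(a_1+2)}{(a_1+1)^2}=\frac{k-(a_1+1)}{(a_1+1)^2}-1$. As $k-(a_1+1)=b_1>0$, comparing the two estimates forces $c_2-1\leq (a_1+1)^2$, that is $c_2\leq (a_1+1)^2+1$; combined with the hypothesis $c_2\geq (a_1+1)^2+1$ this gives $c_2=(a_1+1)^2+1$.

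Finally, since now $c_2=(a_1+1)^2+1$ and $\Gamma$ contains the induced $GQ(a_1+1,c_2-1)$, Theorem~\ref{lt} shows that $E_D$ is a light tail with associated minimal idempotent $E_1$. All hypotheses of Theorem~\ref{dlp} are then met, namely $\Gamma$ is non-bipartite with $k\geq 3$ and $D\geq 3$, is $2$-bounded, has $\theta_D=-\frac{k}{a_1+1}$, and has $E_D$ a light tail, so $\Gamma$ is the dual polar graph $^2A_{2D-1}(r)$, and the proof of that theorem identifies $r=a_1+1$. I expect the main obstacle to be the extraction of the induced generalized quadrangle from $2$-boundedness and the geometric structure; once it is available, the two-sided estimate on $\theta_1$ that pins down $c_2$ and the appeal to the main theorem are routine.
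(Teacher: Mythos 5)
Your proposal is correct and follows essentially the same route as the paper's own proof: use non-bipartite plus geometric to get $a_1\neq 0$ and $\theta_D=-\frac{k}{a_1+1}$, extract the induced $GQ(a_1+1,c_2-1)$ from $2$-boundedness via Lemma~\ref{hiraki}, sandwich $\theta_1$ between Lemmas~\ref{ubd} and~\ref{lbd} to force $c_2=(a_1+1)^2+1$, then invoke Theorem~\ref{lt} for the light tail and Theorem~\ref{dlp} to conclude. Your explicit comparison of the two $\theta_1$ bounds (using $b_1>0$) just spells out what the paper compresses into ``Lemma~\ref{lbd} implies that $c_2=(a_1+1)^2+1$.''
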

\begin{proof}
The graph $\Gamma$ is non-bipartite geometric implies that $\theta_D=-\frac{k}{a_1+1}$ and $a_1\neq 0$. 
As $\Gamma$ is $2$-bounded with $c_2\geq (a_1+1)^2+1\neq 1$, by Lemma \ref{hiraki}, we see that $\Gamma$ contains an induced subgraph $GQ(a_1+1,c_2-1)$  and hence $\theta_1\leq \frac{k-(a_1+1)}{c_2-1}-1$, by Lemma \ref{ubd}. Lemma \ref{lbd} implies that $c_2 = (a_1+1)^2+1$ and now it follows that
$E_D$ is a light tail with associated minimal idempotent $E_1$, by Theorem \ref{lt}. 
Now, the result follows from Theorem \ref{dlp}. 
\end{proof}

{\bf Proof of Theorem 1.2.}
Every triangle of $\Gamma$ is a Delsarte clique and $\Gamma$ is geometric. 

Choose a vertex $x$ and a Delsarte clique $C$ with $d(x,C)=i$. Then there exists a vertex $y\in\Gamma(x)$ such that $d(y,C)=i-1$. 
Note that $\Gamma_{i-1}(y)\cap C\subseteq \Gamma_i(x)\cap C$, which implies $\gamma_i\geq \gamma_{i-1}$ $(1\leq i\leq D)$.  
So the sequence $\gamma_i$ is non-decreasing and $1\leq \gamma_i\leq a_1+1=2$.

Then there exists some integer $e$ such that
\begin{equation}
    \gamma_i=
        \left\{
        \begin{aligned}
            &1,~ i\leq e-1,\\
            &2,~ i\geq e.
        \end{aligned}
        \right.
\end{equation}
And (\ref{eq:f}) implies the following 
\begin{equation}
    u_i(\theta_D)=
        \left\{
        \begin{aligned}
            &(-\frac{1}{2})^i,~i\leq e,\\
            &(-\frac{1}{2})^{2e-i},~i\geq e+1. 
        \end{aligned}
        \right.
\end{equation}
As $|u_i(\theta_D)|\leq 1$ , which implies $e\geq \frac{D}{2}$. 
If $D\geq 5$, then $e\geq 3$. 
Then $a_i=c_ia_1$ $(i\leq e-1)$ by Equation (\ref{eq:a4}). 
Lemma \ref{hiraki} implies that $\Gamma$ is $2$-bounded, hence it follows that $c_2 =5$ and  $\Gamma$ is the dual polar graph $^2A_{2D-1}(2)$ by Corollary \ref{c2dpl}. 
The case when $2\leq D\leq 4$ follows from \cite[Theorem 1.2]{KQ15}.
\qed

{\bf Acknowledgements.}
We would like to thank Jongyook Park. He carefully read an earlier version of the note and his comments improved the note significantly.

\bibliographystyle{plain}
\bibliography{bib}
\nocite{*}

\end{document}